\documentclass[12pt]{article}%
\usepackage{amsfonts}
\usepackage{mitpress}
\usepackage{amsmath}
\usepackage{amssymb}
\usepackage{graphicx}%
\setcounter{MaxMatrixCols}{30}
\providecommand{\U}[1]{\protect\rule{.1in}{.1in}}
\newtheorem{theorem}{Theorem}

\newtheorem{proposition}[theorem]{Proposition}
\newtheorem{remark}[theorem]{Remark}

\newenvironment{proof}[1][Proof]{\noindent\textbf{#1.} }{\ \rule{0.5em}{0.5em}}
\newdimen\dummy
\dummy=\oddsidemargin
\addtolength{\dummy}{72pt}
\marginparwidth=.5\dummy
\marginparsep=.1\dummy
\begin{document}

\title{Upper bounds for the error in some interpolation and extrapolation designs}
\author{Michel Broniatowski$^{1}$,Giorgio Celant$^{2}$, Marco Di Battista$^{2}$ ,
Samuela Leoni-Aubin$^{3}$}
\maketitle
\title{Upper bounds for the error in some interpolation and extrapolation designs}
\author{Michel Broniatowski$^{1}$,Giorgio Celant$^{2}$, Marco Di Battista$^{2}$ ,
Samuela Leoni-Aubin$^{3}$\\$^{1}${\footnotesize Universit\'{e} Pierre et Marie Curie,
LSTA,e-mail:michel.broniatowski@upmc.fr}\\$^{2}${\footnotesize University of Padua, Department of Statistical Sciences,}\\{\footnotesize \noindent}$^{3}${\footnotesize INSA Lyon, ICJ, e-mail:
samuela.leoni@insa-lyon.fr}}
\maketitle

\begin{abstract}
This paper deals with probabilistic upper bounds for the error in functional
estimation defined on some interpolation and extrapolation designs, when the
function to estimate is supposed to be analytic. The error pertaining to the
estimate may depend on various factors: the frequency of observations on the
knots, the position and number of the knots, and also on the error committed
when approximating the function through its Taylor expansion. When the number
of observations is fixed, then all these parameters are determined by the
choice of the design and by the choice estimator of the unknown function.

AMS (2010) Classification: 62K05, 41A5

\end{abstract}

\noindent

\section{Introduction\bigskip\ }

Consider a function $\varphi$ defined on some open set $D \subset\mathbb{R}$
and which can be observed on a compact subset $S$ included in $D$. The problem
that we consider is the estimation of this function through some interpolation
or extrapolation techniques. This turns out to define a finite set of points
$s_{i}$ in a domain $\tilde{S}$ included in $S$ and the number of measurement
of the function $\varphi$ at each of these points, that is to define a design
$\mathcal{P}:=\left\{  \left(  s_{i},n_{i}\right)  \in S\times%
\mathbb{N}
, ~i=0,...,l,~\widetilde{S}\subsetneqq S\right\}  $. The points $s_{i}$ are
called the \emph{knots}, $n_{i}$ is the frequency of observations at knot
$s_{i}$ and $l+1$ is the number of knots. \noindent The choice of the design
$\mathcal{P}$ is based on some optimality criterion. For example, we could
choose an observation scheme that minimize the variance of the estimator of
$\varphi$.

\noindent The choice of $\mathcal{P}$ has been investigated by many authors.
Hoel and Levine and Hoel (\cite{10} and \cite{11}) considered the case of the
extrapolation of a polynomial function with known degree in one and two
variables. Spruill, in a number of papers (see \cite{17}, \cite{18}, \cite{19}
and \cite{20}) proposed a technique for the (interpolation and extrapolation)
estimation of a function and its derivatives, when the function is supposed to
belong to a Sobolev space, Celant (in \cite{4} and \cite{5}) considered the
extrapolation of quasi-analytic functions and Broniatowski-Celant in \cite{3}
studied optimal designs for analytic functions through some control of the bias.

\noindent The main defect of any interpolation and extrapolation scheme is its
extreme sensitivity to the uncertainties pertaining to the values of $\varphi$
on the knots. The largest the number $l+1$ of knots, the more unstable is the
estimate. In fact, even when the function $\varphi$ is accurately estimated on
the knots, the estimates of $\varphi$ or of one of its derivatives
$\varphi^{(j)}$ at some point in $D$ may be quite unsatisfactory, due either
to a wrong choice of the number of knots or to their location. The only case
when the error committed while estimating the values $\varphi(s_{i})$ is not
amplified in the interpolation procedure is the linear case. Therefore, for
any more envolved case the choice of $l$ and $(s_{i}, n_{i} )$ must be handled
carefully, which explains the wide literature devoted to this subject. For
example, if we estimate $\varphi\left(  v\right)  ,v\in S\diagdown
\widetilde{S},$ by $\widehat{\varphi\left(  s_{k}\right)  }:=\varphi\left(
s_{k}\right)  +\varepsilon\left(  k\right)  ,$ where $\varepsilon\left(
k\right)  $ denotes the estimation error and $\widetilde{S}$ a Tchebycheff set
of points $S $, we obtain
\[
\left\vert \varphi\left(  v\right)  -\widehat{\varphi\left(  s_{k}\right)
}\right\vert \leq\left(  \max_{k}\left\vert \varepsilon\left(  k\right)
\right\vert \right)  \Lambda_{l}\left(  v,s_{k},0\right)  ,
\]
where $\Lambda_{l}\left(  v,s_{i},j\right)  $ is a function that depends on
$\widetilde{S}$, the number of knots and on the order of the derivative that
we aim to estimate (here $0$), and (see \cite{2} and \cite{14} )
\[
\max_{k=0,...,l}\Lambda_{l}\left(  v,s_{k},0\right)  :=\frac{1}{l+1}\sum
_{k=0}^{l}ctg\left(  \frac{2k+1}{4\left(  l+1\right)  }\pi\right)  \sim
\frac{2}{\pi}\ln\left(  l+1\right)  \quad\text{ when }l\rightarrow\infty.
\]
If equidistant knots are used, one gets (see \cite{16})
\[
\max_{k=0,...,l}\Lambda_{l}\left(  v,s_{k},0\right)  \thicksim\frac{2^{l+1}%
}{el\left(  \ln l+\gamma\right)  }, \qquad\gamma
=0,577~~\text{(Euler-Mascheroni constant).}%
\]

\noindent When the bias in the interpolation is zero, as in the case when
$\varphi$ is polynomial with known degree, the design is optimized with
respect to the variance of the interpolated value (see \cite{10}
). In the other cases the criterion that is employed is the minimal MSE
criterion. The minimal MSE criterion allows the estimator to be as accurate as
possible but it does not yield any information on the
interpolation/extrapolation error.

In this paper, we propose a probabilistic tool (based on the concentration of
measure) in order to control the estimation error. In Section 2 we present the
model, the design and the estimators. Section 3 deals with upper bouns for the
error. Concluding remarks are given in Section 4.

\section{The model, the design and the estimators}

Consider an unknown real-valued analytic function $f$ defined on some interval
$D$ :
\begin{align*}
f:~~ D:=\left(  a,b\right)   &  \rightarrow\mathbb{R}\\
v  &  \mapsto f\left(  v\right)  .
\end{align*}

\noindent We assume that this function is observable on a compact subset $S$
included in $D$, $S:=\left[  \underline{s},\overline{s}\right]  \subset D$,
and that its derivatives are not observable at any point of $D$. Let
$\widetilde{S}:=\left\{  s_{k\text{ }}\in\widetilde{S},k=0,...,l\right\}  $ be
a finite subset of $l+1$ elements in the set $S$. The points $s_{k}$ are
called the \emph{knots}.

\noindent Observations $Y_{i}$, $i=1, \ldots,n$ are generated from the
following location-scale model
\begin{align*}
Y_{j}\left(  s_{k}\right)  =  &  f\left(  s_{k}\right)  +\sigma E\left(
Z_{j}\right)  +\varepsilon_{j} ,\\
\varepsilon_{j} :=  &  \sigma Z_{j}-\sigma E\left(  Z_{j}\right)  , \qquad
j=1,...,n_{k},~~k=0,...,l,
\end{align*}
\noindent where $Z$ \ is a completely specified continuous random variable,
the location parameter $f\left(  v\right)  $ and the scale parameter $\sigma>0
$ are unknown parameters. $E\left(  Z\right)  ,\varsigma$ respectively denote
the mean and the variance of $Z$, and $n_{k}$ is the \emph{frequency of
observations} at knot $s_{k}$.

\noindent We assume to observe $(l+1)$ i.i.d. samples, $\underline{Y}\left(
k\right)  :=\left(  Y_{1}\left(  n_{k}\right)  ,...,Y_{n_{k}}\left(
n_{k}\right)  \right)  ,k=0,...,l, $ and $Y_{i}\left(  n_{k}\right)  $ i.i.d.
$Y_{1}\left(  n_{k}\right)  ,$ for all $i\neq k$, $i= 0, \ldots, l$.

\noindent The aim is to estimate a derivative of $f\left(  v\right)  $,
$f^{(d)}(v)$, $d \in\mathbb{N}$, at a point $v \in\left(  a,\overline
{s}\right)  $.

\noindent Let $\varphi\left(  v\right)  :=f\left(  v\right)  +\sigma E\left(
Z\right)  $, and consider the Lagrange polynomial%

\[
L_{s_{k}}\left(  v\right)  :=\prod_{j\neq k,j=0}^{l}\frac{v-s_{j}}{s_{k}%
-s_{j}}.
\]

\noindent We are interested in interpolating (or extrapolating) some
derivatives of $\varphi$, $\varphi^{(d)}$, with $d \in\mathbb{N}$,
\[
\mathcal{L}_{l}\left(  \varphi^{\left(  d \right)  }\right)  \left(  v\right)
:=\sum_{k=0}^{l}\varphi\left(  s_{k}\right)  L_{s_{k}}^{\left(  d \right)
}\left(  v\right)  .
\]

\noindent The domain of extrapolation is denoted $U := D\diagdown S$. It is
convenient to define a generic point $v \in D$ stating that it is an
\emph{observed point} if it is a knot, an \emph{interpolation point} if $v \in
S$ and an \emph{extrapolation point} if $v \in U$.

\noindent For all $d \in\mathbb{N}$, for any $v \in S$, the Lagrange
interpolation scheme converges for the function $\varphi^{(d)}$, that is, for
$l \to\infty$,
\[
\mathcal{L}_{l}\left(  \varphi^{\left(  d \right)  }\right)  \left(  s\right)
\rightarrow\varphi^{\left(  d \right)  }\left(  s\right)  , \quad\forall s\in
S.
\]

\noindent Interpolating the derivative $\varphi^{\left(  d+i\right)  }\left(
s^{\ast}\right)  $ at a point $s^{\ast}\in S$ opportunely chosen, a Taylor
expansion with order $(m-1)$ of $\varphi^{\left(  d \right)  } (v)$ at point
$v$ from $s^{\ast}$ gives
\[
T_{\varphi^{\left(  d \right)  },m,l}\left(  v\right)  :=\sum_{i=0}^{m-1}%
\frac{\left(  v-s^{\ast}\right)  }{i!}^{i}\mathcal{L}_{l}\left(
\varphi^{\left(  d +i\right)  }\right)  \left(  s^{\ast}\right)  , \quad
s^{\ast}\in S,
\]
\noindent and we have
\[
\lim_{m\rightarrow\infty}\lim_{l\rightarrow\infty}T_{\varphi^{\left(  d
\right)  },m,l}\left(  v\right)  =\varphi^{\left(  d \right)  }\left(
v\right)  , \quad\forall v\in D.
\]

\noindent When $\varphi^{(d)} \in\mathcal{C}^{\alpha}(D) $, $\forall\alpha$,
$l\geq2\alpha-3$, the upper bound for the error of approximation is given in
\cite{1},
\[
E_{t}:=\sup_{v \in D}\left\vert \varphi^{\left(  d \right)  }\left(  v\right)
-T_{\varphi^{\left(  d \right)  },m,l}\left(  v\right)  \right\vert \leq
M(m,l, \alpha),
\]

\noindent where $M(m,l, \alpha) =A\left(  \alpha,l\right)  +B\left(  m\right)
$,
\[
A\left(  \alpha,l\right)  :=K\left(  \alpha,l\right)  \sum_{i=0}^{m-1}\left(
\sup_{s\in S}\left\vert \varphi^{\left(  d+i+\alpha\right)  }\left(  s\right)
\right\vert \frac{1}{i!}\sup_{v\in U}\left\vert v-s^{\ast}\right\vert
^{i}\right)  ,
\]
\[
K\left(  \alpha,l\right)  :=\left(  \frac{\pi}{2\left(  1+l\right)  }\left(
\overline{s}-\underline{s}\right)  \right)  ^{\alpha}\left(  9+\frac{4}{\pi
}\ln\left(  1+l\right)  \right)  ,
\]
\[
\text{and } \quad B\left(  m\right)  :=\sup_{v\in\left(  a,\overline
{s}\right)  }\left(  \frac{\left\vert u-s^{\ast}\right\vert ^{m}\left\vert
\varphi^{\left(  d+\alpha\right)  }\left(  v\right)  \right\vert }{m!}\right)
.
\]

\noindent The optimal design writes $\left\{  \left(  n_{k},s_{k }\right)
\in\left(  \mathbb{N}\setminus\{0\} \right)  ^{l+1} \times\mathbb{R} ^{l+1}, ~
n:=\sum_{k=0}^{l}n_{k}, ~ n\text{ fixed}\right\}  $, where $n$ is the total
number of experiments and the $(l+1)$ knots are defined by
\[
s_{k}:=\frac{\overline{s}+\underline{s}}{2}-\frac{\overline{s}-\underline{s}%
}{2}\cos\frac{2k-1}{2l+2}\pi, \qquad k=0, \ldots, l,
\]
with $n_{k}:=\left[  \frac{n\sqrt{{P}_{k}}}{\sum_{k=0}^{l}\sqrt{{P}_{k}}%
}\right]  $, $\left[  .\right]  $ denoting the integer part function, and (see
\cite{3} for details)
\[
P_{k}:=\left\vert \sum_{\beta=0}^{m}\sum_{\alpha=0}^{m}\frac{\left(
u-s\right)  ^{\alpha+\beta}}{\alpha!\beta!}L_{s_{k}}^{\left(  \alpha\right)
}\left(  s\right)  L_{s_{k}}^{\left(  \beta\right)  }\left(  s\right)
\right\vert , \qquad k=0,...,l.
\]

\noindent The function $\varphi$ cannot be observed exactly at the knots.
\noindent Let $\widehat{\varphi\left(  s_{k}\right)  }$ denote the least
squares estimate of $\varphi\left(  s_{k}\right)  $ at the knot $s_{k}$ and
\begin{equation}
\label{lagsch}\mathcal{L}_{l}\left(  \widehat{\varphi^{\left(  d+i\right)  }%
}\right)  \left(  v\right)  :=\sum_{k=0}^{l}\widehat{\varphi\left(
s_{k}\right)  }L_{s_{k}}^{\left(  d+i\right)  }\left(  v\right)  .
\end{equation}
We estimate the $d -$th derivative of $\varphi\left(  v\right)  $ at $v\in D$
as follows
\[
\widehat{T}_{\varphi^{\left(  d \right)  },m,l}\left(  v\right)  :=\sum
_{i=0}^{m-1}\frac{\left(  v-s^{\ast}\right)  }{i!}^{i}\mathcal{L}_{l}\left(
\widehat{\varphi^{\left(  d +i\right)  }}\right)  \left(  s^{\ast}\right)  ,
\qquad s^{\ast}\in S.
\]

\noindent The knots $s_{k}$ are chosen in order to minimize the variance of
$\widehat{T}_{\varphi^{\left(  d \right)  },m,l}\left(  v\right)  $ and it
holds
\[
\lim_{m\rightarrow\infty}\lim_{l\rightarrow\infty}{\lim}_{\min_{k=0, \ldots,
l} \left(  n_{k}\right)  \rightarrow\infty}\widehat{T}_{\varphi^{\left(  d
\right)  },m,l}\left(  v\right)  =\varphi^{\left(  d \right)  }\left(
v\right)  , \qquad\forall v \in D.
\]
$\widehat{T}_{\varphi^{\left(  d \right)  },m,l}\left(  v\right)  $ is an
extrapolation estimator when $v\in U$ and an interpolation estimator when
$v\in S.$

\noindent For a fixed degree $l$ of the Lagrange scheme (\ref{lagsch}), the
total error committed while substituting $\varphi^{\left(  d \right)  }\left(
v\right)  $ by $\widehat{T}_{\varphi^{\left(  d \right)  },m,l}\left(
v\right)  $ writes%

\[
E_{Tot}\left(  \varphi^{\left(  d \right)  }\left(  v\right)  \right)
:=\varphi^{\left(  d \right)  }\left(  v \right)  -\widehat{T}_{\varphi
^{\left(  d \right)  },m,l}\left(  v\right)  .
\]

\noindent For the interpolation error concerning $\varphi^{\left(  i+ d
\right)  }$, we have the following result presented in \cite{6}, p.293 : if
$\varphi^{\left(  i+ d \right)  }\in\mathcal{C}^{ \alpha}\left(  S\right)  $,
$\forall\alpha$, $l\geq2 \alpha-3$, then
\[
\sup_{s \in S}\left\vert \varphi^{\left(  d +i\right)  }\left(  s\right)
-\mathcal{L}_{l}\left(  \varphi^{\left(  d +i\right)  }\right)  \left(
s\right)  \right\vert \leq M_{1}:=K\left(  \alpha,l\right)  \sup_{s\in
S}\left\vert \varphi^{\left(  d +i+ \alpha\right)  }\left(  s\right)
\right\vert .
\]

\noindent This error depends on the very choice of the knots and is controlled
through a tuning of $l$.

\noindent The error due to the Taylor expansion of order $\left(  m-1\right)
$
\[
\varphi^{\left(  d \right)  }\left(  v\right)  -\sum_{i=0}^{m-1}\frac{\left(
v-s^{\ast}\right)  }{i!}^{i}\varphi^{\left(  d +i\right)  }\left(  s^{\ast
}\right)
\]
\noindent depends on $s^{\ast}$, it is a truncation error and it can be
controlled through a tuning of $m$.

\noindent Let $\widehat{\varphi(s_{k})}$ be an estimate of $\varphi(s_{k})$ on
the knot $s_{k}$ and
\[
\varepsilon\left(  k\right)  :=\varphi\left(  s_{k}\right)  -\widehat
{\varphi\left(  s_{k}\right)  }, \qquad k=0,...,l
\]
\noindent denote the error pertaining to $\varphi(s_{k})$ due to this
estimation. $\varepsilon\left(  k\right)  $ clearly depends on $n_{k}$, the
frequency of observations at knot $s_{k}$.

\noindent Finally, when $n$ is fixed, the error committed while extrapolating
depends on the design $\{ (n_{k} , s_{k}) \in\left(  \mathbb{N}\setminus\{ 0
\} \right)  ^{l+1} \times\mathbb{R}^{l+1}, ~ k=0, \ldots, l, ~n = \sum
_{k=0}^{l} n_{k} \}$, on $m$ and on $l$.

\noindent Without loss of generality, we will assume $\sigma=1$. In this case
we have $\widehat{\varphi\left(  s_{k}\right)  }$ =$\overline{Y}\left(
s_{k}\right)  :=\frac{\sum_{j=1}^{n_{k}}Y_{j}\left(  k\right)  }{n_{k}}$. The
general case when $\sigma$ is unknown is described in \cite{3}.

\noindent In the next Section we will provide upper bounds for the errors in
order to control them.

\noindent Since $\varphi$ is supposed to be an analytic function, we can
consider the extrapolation as an analytic continuation of the function out of
the set $S$ obtained by a Taylor expansion from an opportunely chosen point
$s^{\ast}$ in $S.$ So, the extrapolation error will depend on the order of the
Taylor expansion and on the precision in the knowledge of the derivatives of
the function at $s^{\ast}$. This precision is given by the interpolation error
and by the estimation errors on the knots. The analyticity assumption also
implies that the interpolation error will quickly converge to zero. Indeed,
for all integer $r$, the following result holds:%

\[
\lim_{l\rightarrow\infty}l^{r}\sup_{s\in S}\left\vert \varphi^{\left(
j\right)  }\left(  s\right)  -\sum_{k=0}^{l}L_{s_{k}}^{\left(  j\right)
}\left(  s\right)  \varphi\left(  s_{k}\right)  \right\vert =0.
\]

\noindent We remark that the instability of the interpolation and
extrapolation schemes discussed by Runge (1901) can be avoided if the chosen
knots form a Tchebycheff set of points in $S$, or if they form a Feteke set of
points in $S$, or by using splines.

\noindent Note that in all the works previously quoted the function is
supposed to be polynomial with known degree (in \cite{10} and \cite{11}), to
belongs to a Sobolev space (see \cite{17}, \cite{18}, \cite{19} and
\cite{20}), or to be quasi analytic (in \cite{4} and \cite{5}), or analytic
(in \cite{3}). Moreover, $\widetilde{S}$ is chosen as a Tchebycheff set of
points in $S$ .

\noindent Bernstein in \cite{2} affirmed that polynomials of low degree are
good approximations for analytic functions. In the case of the
Broniatowski-Celant design (\cite{3}), the double approximation to approach
$\varphi$ allows to choose any subset of $S$ as possible interpolation set.
So, if the unknown function is supposed to be analytic, then we can choose a
small interpolation set in order to obtain a small interpolation error.

\section{Upper bounds and control of the error}

The extrapolation error depends on three kinds of errors: truncation error,
interpolation error and error of estimation of the function on the knots. In
order to control the extrapolation error, we split an upper bound for it in a
sum of three terms, each term depending only on one of the three kinds of errors.

\noindent In the sequel, we will distinguish two cases: in the first case, we
suppose that the observed random variable $Y$ is bounded, in the second case
$Y$ is supposed to be a random variable with unbounded support. We suppose
that the support is known.

\subsection{\bigskip Case 1: $Y$ is a bounded random variable}

If $\tau_{1},\tau_{2}$ (assumed known) are such that $\Pr\left(  \tau_{1}\leq
Y\leq\tau_{2}\right)  =1$, it holds $\left\vert \varphi\left(  v\right)
\right\vert \leq R$, where $R:=\max\left\{  \left\vert \tau_{1}\right\vert
,\left\vert \tau_{2}\right\vert \right\}  .$ Indeed, $E\left(  Y\right)
=\varphi\in\left[  -R,R\right]  .$ Let%

\[
\varepsilon\left(  k\right)  :=\frac{\sum_{j=1}^{n_{k}}Y_{j}\left(  k\right)
}{n_{k}}-\varphi\left(  s_{k}\right)  .
\]

\noindent The variables $Y_{j}\left(  k\right)  ,\forall j=1,...,n_{k},\forall
k=0,..,l,$ are i.i.d., with the same bounded support and for all $k$$,
E\left(  Y_{j}\left(  k\right)  \right)  =$ $\varphi\left(  s_{k}\right)  $,
hence we can apply the Hoeffding's inequality (in \cite{9}):%

\[
\Pr\left\{  \left\vert \varepsilon\left(  k\right)  \right\vert \geq
\rho\right\}  \leq2\exp\left(  -\frac{2\rho^{2}n_{k}}{\left(  \tau_{2}%
-\tau_{1}\right)  ^{2}}\right)  .
\]

\noindent In Proposition 1, we give an upper bound for the extrapolation error
denoted by $E_{ext}$. This bound is the sum of the three terms, $M_{Taylor}$,
controlling the error associated to the truncation of the Taylor expansion
which defines $\varphi^{\left(  d\right)  }$, $M_{interp}$, controlling the
interpolation error and $M_{est}$, describing the estimation error on the knots.

\begin{proposition}
For all $\alpha\in\mathbb{N} \setminus\{ 0 \} $, if $\varphi^{\left(
i+d\right)  }\in\mathcal{C}^{\alpha} \left(  a,b\right)  $, $l\geq2\alpha-3$,
then, $\forall u \in U$, $\left\vert E_{ext}\left(  u\right)  \right\vert \leq
M_{Taylor}+M_{interp}+M_{est}$, where
\[
M_{Taylor}:=R\frac{\left(  d+m\right)  !}{m!}\left(  \frac{s^{\ast}-u}%
{b-a}\right)  ^{m}\frac{1}{\left(  b-a\right)  ^{d}},
\]
\[
K\left(  l,\alpha\right)  :=\left(  9+\frac{4}{\pi}\ln\left(  1+l\right)
\right)  \left(  \frac{\pi}{2\left(  1+l\right)  }\right)  ^{\alpha},
\]
\[
M_{interp}:=K\left(  l,\alpha\right)  \frac{R}{\left(  \overline{s}%
-\underline{s}\right)  ^{d+\alpha}}\sum_{i=0}^{m-1}\left(  \frac{s^{\ast}%
-u}{\overline{s}-\underline{s}}\right)  ^{i}\frac{\left(  d+i+\alpha\right)
!}{i!},
\]
\[
\Lambda\left(  l,m\right)  :=\sum_{i=0}^{m-1}\sum_{k=0}^{l}\frac{\left(
s^{\ast}-u\right)  ^{i}}{i!}\left\vert L_{s_{k}}^{\left(  d+i\right)  }\left(
s^{\ast}\right)  \right\vert ,
\]
\[
M_{est}:=\Lambda\left(  l,m\right)  \left(  \max_{k=0,...,l}\left\vert
\varepsilon\left(  k\right)  \right\vert \right)  .
\]

\end{proposition}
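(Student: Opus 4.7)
The plan is to decompose the total error $E_{ext}(u)=\varphi^{(d)}(u)-\widehat T_{\varphi^{(d)},m,l}(u)$ into three conceptually distinct pieces by inserting the two intermediate quantities
\[
P_{m,d}(u):=\sum_{i=0}^{m-1}\frac{(u-s^{\ast})^{i}}{i!}\,\varphi^{(d+i)}(s^{\ast})
\qquad\text{and}\qquad
T_{\varphi^{(d)},m,l}(u),
\]
and applying the triangle inequality. This yields a Taylor truncation term $\varphi^{(d)}(u)-P_{m,d}(u)$, an interpolation term $P_{m,d}(u)-T_{\varphi^{(d)},m,l}(u)$, and an estimation term $T_{\varphi^{(d)},m,l}(u)-\widehat T_{\varphi^{(d)},m,l}(u)$. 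Each of these will be controlled separately to yield $M_{Taylor}$, $M_{interp}$, and $M_{est}$.

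For the Taylor piece, the Lagrange remainder gives
\[
\bigl|\varphi^{(d)}(u)-P_{m,d}(u)\bigr|\le\frac{|u-s^{\ast}|^{m}}{m!}\,\sup_{\xi\in(a,b)}\bigl|\varphi^{(d+m)}(\xi)\bigr|.
\]
The crucial step is to bound the high derivative: since $\varphi$ is analytic on $D=(a,b)$ with $|\varphi|\le R$, Cauchy's estimates on a disk of radius $b-a$ give $|\varphi^{(k)}(\xi)|\le R\,k!/(b-a)^{k}$ for every $k$. Taking $k=d+m$ and rearranging produces exactly $M_{Taylor}$. I would note that this is the one place where the analyticity hypothesis is used in an essential way (as opposed to $\mathcal{C}^{\infty}$).

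For the interpolation piece, the triangle inequality and the definition of $T_{\varphi^{(d)},m,l}$ give
\[
\bigl|P_{m,d}(u)-T_{\varphi^{(d)},m,l}(u)\bigr|\le\sum_{i=0}^{m-1}\frac{|u-s^{\ast}|^{i}}{i!}\,\bigl|\varphi^{(d+i)}(s^{\ast})-\mathcal{L}_{l}(\varphi^{(d+i)})(s^{\ast})\bigr|,
\]
and I would plug in the already-quoted bound $M_{1}=K(\alpha,l)\sup_{s\in S}|\varphi^{(d+i+\alpha)}(s)|$ from \cite{6} for each $i$. Applying Cauchy's estimates again, this time with $b-a$ replaced by $\overline{s}-\underline{s}$ (which is permissible since $S\subset D$ and $\varphi$ is bounded by $R$ on $S$), yields $|\varphi^{(d+i+\alpha)}(s)|\le R(d+i+\alpha)!/(\overline{s}-\underline{s})^{d+i+\alpha}$. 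Factoring out the constants matching $M_{interp}$ is a straightforward rearrangement.

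For the estimation piece, the two $\widehat T$ and $T$ differ only by $\varphi(s_{k})$ vs $\widehat{\varphi(s_{k})}$, so by linearity of the Lagrange operator
\[
T_{\varphi^{(d)},m,l}(u)-\widehat T_{\varphi^{(d)},m,l}(u)=\sum_{i=0}^{m-1}\frac{(u-s^{\ast})^{i}}{i!}\sum_{k=0}^{l}\varepsilon(k)L_{s_{k}}^{(d+i)}(s^{\ast}).
\]
Pulling $\max_{k}|\varepsilon(k)|$ outside the double sum gives $M_{est}$ directly, with no further analysis needed. The main obstacle is really just the Taylor step, where one must recognize that $R$ bounds $\varphi$ globally on $D$ and exploit Cauchy's estimates to convert a bound on $\varphi$ into bounds on every derivative; the rest of the proof is an organized application of the triangle inequality combined with the already-available interpolation inequality.
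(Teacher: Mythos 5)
Your proposal is correct and follows essentially the same route as the paper: a triangle-inequality decomposition through the Taylor polynomial at $s^{\ast}$ and the Lagrange-interpolated quantity, with Cauchy's estimates for analytic functions converting the bound $|\varphi|\le R$ into bounds on $\varphi^{(d+m)}$ and $\varphi^{(d+i+\alpha)}$, the quoted Coatm\'elec inequality for the interpolation term, and factoring out $\max_{k}|\varepsilon(k)|$ for the estimation term. The only difference is organizational (you insert both intermediate quantities at once, the paper does it in two successive steps), which changes nothing of substance.
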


\begin{proof}
By using the Cauchy's Theorem on the derivatives of the analytic functions, we
obtain
\[
\left\vert \varphi^{\left(  d\right)  }\left(  u\right)  -\widehat
{\varphi^{\left(  d\right)  }\left(  u\right)  }\right\vert =\left\vert
\varphi^{\left(  d\right)  }\left(  u\right)  +\sum_{i=0}^{m-1}\frac
{\varphi^{\left(  d+i\right)  }\left(  s^{\ast}\right)  }{i!}\left(
u-s^{\ast}\right)  ^{i}-\sum_{i=0}^{m-1}\frac{\varphi^{\left(  d+i\right)
}\left(  s^{\ast}\right)  }{i!}\left(  u-s^{\ast}\right)  ^{i}-\widehat
{\varphi^{\left(  d\right)  }\left(  u\right)  }\right\vert
\]%
\[
\leq\left\vert \varphi^{\left(  d\right)  }\left(  u\right)  -\sum_{i=0}%
^{m-1}\frac{\varphi^{\left(  d+i\right)  }\left(  s^{\ast}\right)  }%
{i!}\left(  u-s^{\ast}\right)  ^{i}\right\vert +\left\vert \sum_{i=0}%
^{m-1}\frac{\varphi^{\left(  d+i\right)  }\left(  s^{\ast}\right)  }%
{i!}\left(  u-s^{\ast}\right)  ^{i}-\widehat{\varphi^{\left(  d\right)
}\left(  u\right)  }\right\vert
\]%
\[
\leq\frac{\sup_{v\in U}\left\vert \varphi^{\left(  d+m\right)  }\left(
v\right)  \right\vert }{m!}\left(  s^{\ast}-u\right)  ^{m}+\left\vert
\sum_{i=0}^{m-1}\frac{\varphi^{\left(  d+i\right)  }\left(  s^{\ast}\right)
}{i!}\left(  u-s^{\ast}\right)  ^{i}-\sum_{i=0}^{m-1}\frac{\widehat
{\varphi^{\left(  d+i\right)  }\left(  s^{\ast}\right)  }}{i!}\left(
u-s^{\ast}\right)  ^{i}\right\vert
\]%
\[
\leq\frac{R\left(  m+d\right)  !}{\left(  b-a\right)  ^{d}m!}\left(
\frac{s^{\ast}-u}{b-a}\right)  ^{m}+\left\vert \sum_{i=0}^{m-1}\frac{\left(
s^{\ast}-u\right)  ^{i}}{i!}\left(  \varphi^{\left(  d+i\right)  }\left(
s^{\ast}\right)  -\widehat{\varphi^{\left(  d+i\right)  }\left(  s^{\ast
}\right)  }\right)  \right\vert
\]%
\[
\leq\frac{R\left(  m+d\right)  !}{\left(  b-a\right)  ^{d}m!}\left(
\frac{s^{\ast}-u}{b-a}\right)  ^{m}+\sum_{i=0}^{m-1}\frac{\left(  s^{\ast
}-u\right)  ^{i}}{i!}\left\vert \varphi^{\left(  d+i\right)  }\left(  s^{\ast
}\right)  -\widehat{\varphi^{\left(  d+i\right)  }\left(  s^{\ast}\right)
}\right\vert
\]%
\[
\leq M_{Taylor}+\sum_{i=0}^{m-1}\frac{\left(  s^{\ast}-u\right)  ^{i}}%
{i!}\left\vert
\begin{array}
[c]{c}%
\varphi^{\left(  d+i\right)  }\left(  s^{\ast}\right)  -\sum_{k=0}^{l}%
L_{s_{k}}^{\left(  d+i\right)  }\left(  s^{\ast}\right)  \varphi\left(
s_{k}\right) \\
+\sum_{k=0}^{l}L_{s_{k}}^{\left(  d+i\right)  }\left(  s^{\ast}\right)
\varphi\left(  s_{k}\right)  -\widehat{\varphi^{\left(  d+i\right)  }\left(
s^{\ast}\right)  }%
\end{array}
\right\vert
\]%
\begin{align*}
&  \leq M_{Taylor}+\sum_{i=0}^{m-1}\frac{\left(  s^{\ast}-u\right)  ^{i}}%
{i!}\left\vert \varphi^{\left(  d+i\right)  }\left(  s^{\ast}\right)
-\sum_{k=0}^{l}L_{s_{k}}^{\left(  d+i\right)  }\left(  s^{\ast}\right)
\varphi\left(  s_{k}\right)  \right\vert +\\
&  \sum_{i=0}^{m-1}\sum_{k=0}^{l}\frac{\left(  s^{\ast}-u\right)  ^{i}}%
{i!}L_{s_{k}}^{\left(  d+i\right)  }\left(  s^{\ast}\right)  \left\vert
\varphi\left(  s_{k}\right)  -\overline{Y}\left(  k\right)  \right\vert
\end{align*}%
\begin{align*}
&  \leq M_{Taylor}+\sum_{i=0}^{m-1}\frac{\left(  s^{\ast}-u\right)  ^{i}}%
{i!}K\left(  l,\alpha\right)  \left(  \sup_{s\in S}\left\vert \varphi^{\left(
d+i+\alpha\right)  }\left(  s\right)  \right\vert \right) \\
&  +\sum_{i=0}^{m-1}\sum_{k=0}^{l}\frac{\left(  s^{\ast}-u\right)  ^{i}}%
{i!}L_{s_{k}}^{\left(  d+i\right)  }\left(  s^{\ast}\right)  \left\vert
\varphi\left(  s_{k}\right)  -\overline{Y}\left(  k\right)  \right\vert
\end{align*}%
\begin{align*}
&  \leq M_{Taylor}+\frac{R}{\left(  \overline{s}-\underline{s}\right)
^{d+\alpha}}\sum_{i=0}^{m-1}\frac{\left(  s^{\ast}-u\right)  ^{i}}{i!}K\left(
l,\alpha\right)  \frac{\left(  d+i+\alpha\right)  !}{\left(  \overline
{s}-\underline{s}\right)  ^{i}}\\
&  +\sum_{i=0}^{m-1}\sum_{k=0}^{l}\frac{\left(  s^{\ast}-u\right)  ^{i}}%
{i!}L_{s_{k}}^{\left(  d+i\right)  }\left(  s^{\ast}\right)  \left\vert
\varphi\left(  s_{k}\right)  -\overline{Y}\left(  k\right)  \right\vert
\end{align*}%
\[
\leq M_{Taylor}+M_{interp}+\sum_{i=0}^{m-1}\sum_{k=0}^{l}\frac{\left(
s^{\ast}-u\right)  ^{i}}{i!}L_{s_{k}}^{\left(  d+i\right)  }\left(  s^{\ast
}\right)  \left\vert \varphi\left(  s_{k}\right)  -\overline{Y}\left(
k\right)  \right\vert
\]%
\begin{align*}
&  \leq M_{Taylor}+M_{interp}+\left(  \max_{k=0,...,l}\left\vert
\varepsilon\left(  k\right)  \right\vert \right)  \sum_{i=0}^{m-1}\sum
_{k=0}^{l}\frac{\left(  s^{\ast}-u\right)  ^{i}}{i!}\left\vert L_{s_{k}%
}^{\left(  d+i\right)  }\left(  s^{\ast}\right)  \right\vert \\
&  =M_{Taylor}+M_{interp}+M_{est}.
\end{align*}

\end{proof}

\noindent Proposition 2 yields the smallest integer such that the error of
estimation is not greater than a chosen threshold with a fixed probability.

\begin{proposition}
$\forall\eta\in\left[  0,1\right]  ,\forall\rho\in\mathbb{R} ^{+},\exists
n\in\mathbb{N} $ such that
\[
Pr \left(  \max_{k=0,...,l}\left\vert \varepsilon\left(  k\right)  \right\vert
\geq\frac{\rho}{\Lambda\left(  l,m\right)  }\right)  \leq\eta.
\]

\end{proposition}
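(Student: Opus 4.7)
The plan is to combine the Hoeffding bound already quoted in the Case 1 discussion with a union bound over the $l+1$ knots, and then to invert the resulting inequality in $n$. Write $\rho':=\rho/\Lambda(l,m)$ so that the event in the statement is exactly $\{\max_k|\varepsilon(k)|\ge \rho'\}$. A standard union bound gives
\[
\Pr\!\left(\max_{k=0,\ldots,l}|\varepsilon(k)|\ge \rho'\right)\le \sum_{k=0}^{l}\Pr\!\left(|\varepsilon(k)|\ge \rho'\right),
\]
and Hoeffding (applied to each $\overline Y(k)$ via the bound $\tau_1\le Y\le \tau_2$) yields
\[
\Pr\!\left(|\varepsilon(k)|\ge \rho'\right)\le 2\exp\!\left(-\frac{2(\rho')^{2} n_{k}}{(\tau_{2}-\tau_{1})^{2}}\right).
\]

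Next I would plug in the optimal allocation $n_{k}=[\,n\sqrt{P_{k}}/\sum_{j}\sqrt{P_{j}}\,]$. Because of the integer-part operator I would use the lower bound $n_{k}\ge n\sqrt{P_{k}}/\sum_{j}\sqrt{P_{j}}-1$; writing $q_{k}:=\sqrt{P_{k}}/\sum_{j}\sqrt{P_{j}}$ and $q_{\min}:=\min_k q_k>0$, this gives $n_{k}\ge n q_{\min}-1$ uniformly in $k$. Substituting yields
\[
\Pr\!\left(\max_{k}|\varepsilon(k)|\ge \rho'\right)\le 2(l+1)\exp\!\left(-\frac{2(\rho')^{2}(n q_{\min}-1)}{(\tau_{2}-\tau_{1})^{2}}\right).
\]

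I would then require the right-hand side to be at most $\eta$ and solve for $n$: taking logarithms, a sufficient choice is any integer
\[
n\ge \left\lceil \frac{1}{q_{\min}}\left(1+\frac{(\tau_{2}-\tau_{1})^{2}\Lambda(l,m)^{2}}{2\rho^{2}}\,\ln\!\frac{2(l+1)}{\eta}\right)\right\rceil,
\]
which is finite since $\Lambda(l,m)$, $q_{\min}$, $\tau_2-\tau_1$ and $\rho,\eta$ are all fixed and positive. This exhibits the required $n$ and proves the proposition.

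The main obstacle I anticipate is purely bookkeeping rather than conceptual: handling the integer-part in the definition of $n_{k}$ so that the lower bound on $n_{k}$ is non-vacuous (in particular one must take $n$ large enough that $n q_{\min}\ge 1$, otherwise the Hoeffding exponent is non-negative and the bound is useless). A secondary subtlety is that the proposition is silent on the dependence between $n$ and the $n_{k}$; I would explicitly note that the total budget $n=\sum_k n_k$ together with the fixed design weights $q_k$ is what makes the quantifier $\exists n$ meaningful, since once $n$ is chosen the $n_k$ are determined. Everything else is a direct consequence of Hoeffding plus a union bound.
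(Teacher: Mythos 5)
Your proof is correct, but it is not the route the paper takes, and the difference is worth spelling out. You control $\Pr\bigl(\max_{k}|\varepsilon(k)|\ge\rho/\Lambda(l,m)\bigr)$ by the union bound $\sum_{k=0}^{l}\Pr\bigl(|\varepsilon(k)|\ge\rho/\Lambda(l,m)\bigr)$ and then apply Hoeffding to each knot; the paper instead bounds the same probability by the \emph{product} $\prod_{k=0}^{l}\Pr\bigl(|\varepsilon(k)|\ge\rho/\Lambda(l,m)\bigr)$, justified by the observation that if every $|\varepsilon(k)|$ exceeds the threshold then so does the maximum. That observation only shows the intersection event is contained in the max-event, so the product (which by independence equals the probability of the intersection) is a \emph{lower} bound for the quantity of interest, not an upper bound; your union bound is the standard and correct device here. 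The trade-off is purely in the bookkeeping: the product form makes the exponents add up to $\sum_{k}n_{k}=n$, which is how the paper arrives directly at $n^{\ast}=\bigl[\tfrac{(l+1)\ln 2-\ln\eta}{2}\bigl(\tfrac{\Lambda(l,m)(\tau_{2}-\tau_{1})}{\rho}\bigr)^{2}\bigr]$ with no reference to how $n$ is split among the knots, whereas your sum of exponentials forces you to lower-bound each $n_{k}$ separately, hence the detour through the allocation weights $q_{k}=\sqrt{P_{k}}/\sum_{j}\sqrt{P_{j}}$ and the (mild, but worth stating) assumption $q_{\min}>0$. Your final threshold, with $\ln\bigl(2(l+1)/\eta\bigr)$ in place of $(l+1)\ln 2-\ln\eta$ and an extra factor $1/q_{\min}$, is a legitimate witness for the existential claim, and your remark that the quantifier $\exists n$ is only meaningful once the rule linking $n$ to the $n_{k}$ is fixed is exactly the point the paper leaves implicit.
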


\begin{proof}
If, $\forall k$ $\left\vert \varepsilon\left(  k\right)  \right\vert \geq
\frac{\rho}{\Lambda\left(  l,m\right)  }$, then $\max_{k=0,...,l}\left\vert
\varepsilon\left(  k\right)  \right\vert \geq\frac{\rho}{\Lambda\left(
l,m\right)  }.$ We have
\[
\ \Pr\left(  \max_{k=0,...,l}\left\vert \varepsilon\left(  k\right)
\right\vert \geq\frac{\rho}{\Lambda\left(  l,m\right)  }\right)  \leq
\prod_{k=0}^{l}\Pr\left(  \left\vert \varepsilon\left(  k\right)  \right\vert
\geq\frac{\rho}{\Lambda\left(  l,m\right)  }\right)  \leq\prod_{k=0}^{l}%
2\exp\left(  -\frac{2\rho^{2}}{\left(  \Lambda\left(  l,m\right)  \right)
^{2}}n_{k}\right)  .
\]

\noindent So, we can choose
\[
n^{\ast}=\left[  \frac{\left(  l+1\right)  \ln2-\ln\eta}{2}\left(
\frac{\Lambda\left(  l,m\right)  \left(  \tau_{2}-\tau_{1}\right)  }{\rho
}\right)  ^{2}\right]  .
\]

\end{proof}

\noindent Proposition 3 gives an upper bound for the extrapolation error that
depends on $\left(  l,m,n\right)  $. We recall that the number of knots $l+1$
controls the interpolation error, $m$ denotes the number of terms used in the
Taylor expansion for $\varphi^{\left(  d\right)  }$ and $n$ is the total
number of observations used to estimate $\varphi\left(  s_{k}\right)
,k=0,..,l $. Hence $n$ controls the total estimation error.

\begin{proposition}
With the same hypotheses and notations, we have that
\[
\forall\left(  \rho_{m},\rho_{l},\rho_{n}\right)  \in%
\mathbb{R}
( \mathbb{R} ^{+})^{3}, \quad\left\vert E_{ext}\left(  u\right)  \right\vert
\leq\rho_{m}+\rho_{l}+\rho_{n}%
\]
\noindent with probability $\eta$. $\eta$ depends on the choice of $\left(
\rho_{m},\rho_{l},\rho_{n}\right)  $, which depends on $\left(  m,l,n\right)
.$
\end{proposition}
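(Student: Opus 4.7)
The plan is to combine Propositions 1 and 2 by tuning the design parameters $m$, $l$, $n$ sequentially so that each of the three summands in the decomposition of Proposition 1 is controlled by its corresponding prescribed threshold. Proposition 1 supplies the deterministic inequality
\[
|E_{ext}(u)| \leq M_{Taylor} + M_{interp} + M_{est},
\]
so it suffices to force $M_{Taylor} \leq \rho_m$ and $M_{interp} \leq \rho_l$ deterministically and $M_{est} \leq \rho_n$ with high probability. The tuning must be carried out in this order because $M_{interp}$ involves the $m$-indexed sum and $M_{est}$ involves $\Lambda(l,m)$.

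First I would select $m = m(\rho_m)$. Since $s^{\ast} \in S \subset (a,b)$ and $u \in U \subset (a,b)$, we have $|s^{\ast}-u| < b-a$, so $((s^{\ast}-u)/(b-a))^m$ decays geometrically in $m$ while the prefactor $(d+m)!/m! = (m+1)\cdots(m+d)$ grows only polynomially of degree $d$ in $m$. Hence $M_{Taylor} \to 0$ as $m \to \infty$, and the smallest integer $m$ making $M_{Taylor} \leq \rho_m$ is read off by inverting the inequality.

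With $m$ fixed, I would then select $l = l(\rho_l, m)$. The only $l$-dependent factor in $M_{interp}$ is $K(l,\alpha) = (9+\tfrac{4}{\pi}\ln(1+l))(\tfrac{\pi}{2(1+l)})^{\alpha}$, which tends to zero for any fixed $\alpha \geq 1$; the remaining factors depend only on $(d,m,\alpha,\overline{s}-\underline{s},R)$. Consequently the smallest $l$ (subject to the admissibility constraint $l \geq 2\alpha-3$) that enforces $M_{interp} \leq \rho_l$ can be written down explicitly.

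Finally, with $(m,l)$ fixed the quantity $\Lambda(l,m)$ is determined. Applying Proposition 2 with $\rho = \rho_n$ shows that any $n \geq ((l+1)\ln 2 - \ln \eta)/2 \cdot (\Lambda(l,m)(\tau_2-\tau_1)/\rho_n)^2$ yields $\Pr(M_{est} \geq \rho_n) \leq \eta$. Inverting this relation expresses $\eta$ explicitly as a function of $(m,l,n,\rho_n,\tau_2-\tau_1)$. On the complementary event, of probability at least $1-\eta$, the three component bounds combine to give $|E_{ext}(u)| \leq \rho_m + \rho_l + \rho_n$. The only real subtlety is the sequential book-keeping of the dependencies $l = l(\rho_l, m)$ and $n = n(\rho_n, m, l, \eta)$; no analytic input beyond Propositions 1 and 2 is needed, so the result is essentially a consolidation statement showing how each design parameter buys a prescribed reduction of one of the three error components at the cost of a prescribed confidence.
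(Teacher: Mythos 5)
Your proposal is correct and follows essentially the same route as the paper: invoke the deterministic decomposition of Proposition 1, tune $(m,l)$ so that $M_{Taylor}$ and $M_{interp}$ meet the thresholds $\rho_{m}$ and $\rho_{l}$, and then use Proposition 2 to choose $n=n^{\ast}$ so that $M_{est}\leq\rho_{n}$ with the prescribed confidence. Your version is slightly more careful than the paper's (which simply "solves the system" $(M_{Taylor},M_{interp})=(\rho_{m},\rho_{l})$), since you justify that both quantities tend to zero and account for the integrality of $m$ and $l$, but the underlying argument is identical.
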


\begin{proof}
When $\left(  \rho_{m},\rho_{l}\right)  $ is fixed , we can choose $\left(
m,l\right)  $ as the solution of the system:
\[
\left(  M_{Taylor},M_{interp}\right)  =\left(  \rho_{m},\rho_{l}\right)  .
\]
\noindent We end the proof by taking $\rho_{n}=\frac{\rho}{\Lambda\left(
l,m\right)  }$ and $n=n^{\ast}$. \newline
\end{proof}

\noindent In the case of the estimation of $\varphi\left(  u\right)  $ (i.e.,
when $d=0$) we obtain for the couple $\left(  m,n\right)  $ the explicit
solution
\[
m=\frac{\ln\rho_{m}-\ln R}{\ln\left(  s^{\ast}-u\right)  -\ln\left(
b-a\right)  },
\]
\[
n=\left[  \frac{\left(  l+1\right)  \ln2-\ln\eta}{2}\left(  \frac
{\Lambda\left(  l\right)  \left(  \tau_{2}-\tau_{1}\right)  }{\rho}\right)
^{2}\right]  ,\Lambda\left(  l\right)  =\sum_{i=0}^{m-1}\sum_{k=0}^{l}%
\frac{\left(  s^{\ast}-u\right)  ^{i}}{i!}\left\vert L_{s_{k}}^{\left(
i\right)  }\left(  s^{\ast}\right)  \right\vert .
\]

\noindent When $l\geq2\alpha-3$, $l$ is the solution of the equation
\[
\text{ }\rho_{l}=\left(  9+\frac{4}{\pi}\ln\left(  1+l\right)  \right)
\left(  \frac{\pi}{2\left(  1+l\right)  }\right)  ^{\alpha}\frac{R}{\left(
\overline{s}-\underline{s}\right)  ^{\alpha}}\sum_{i=0}^{m-1}\left(
\frac{s^{\ast}-u}{\overline{s}-\underline{s}}\right)  ^{i}\frac{\left(
i+\alpha\right)  !}{i!}.
\]

\noindent Theorem 4, due to Markoff, provides an uniform bound for the
derivatives of a Lagrange polynomial.

\begin{theorem}
(Markoff) Let $P_{l}\left(  s\right)  :=\sum_{j}a_{j}s^{j}$ be a polynomial
with real coefficients and degree $l$. If $\sup_{s\in S}\left\vert
P_{l}\left(  s\right)  \right\vert \leq W,$ then for all $s$ in $intS$ and for
all $l$ in $\mathbb{N} ,$ it holds%

\[
\left\vert P_{l}^{\left(  j\right)  }\left(  s\right)  \right\vert \leq
\frac{l^{2}\left(  l^{2}-1\right)  ...\left(  l^{2}-\left(  j-1\right)
^{2}\right)  }{\left(  2j-1\right)  !!}\left(  \frac{2}{\left(  \overline
{s}-\underline{s}\right)  }\right)  ^{j}W.
\]

\end{theorem}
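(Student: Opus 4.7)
The plan is to reduce the statement to the classical V.A. Markov inequality on the interval $[-1,1]$ by an affine change of variable, and then invoke the known value of the $j$-th derivative of the Chebyshev polynomial at the endpoint. First I would normalize the interval: define the affine bijection $\phi:[-1,1]\to[\underline{s},\overline{s}]$ by $\phi(t):=\frac{\overline{s}-\underline{s}}{2}\,t+\frac{\overline{s}+\underline{s}}{2}$, and set $Q_{l}(t):=P_{l}(\phi(t))$. Then $Q_{l}$ is a polynomial of degree $l$ with $\sup_{t\in[-1,1]}|Q_{l}(t)|\leq W$, and the chain rule yields
\[
P_{l}^{(j)}(s)=\left(\frac{2}{\overline{s}-\underline{s}}\right)^{j}Q_{l}^{(j)}(t),\qquad s=\phi(t).
\]
Thus the factor $\bigl(2/(\overline{s}-\underline{s})\bigr)^{j}$ in the target bound is accounted for by this rescaling, and it suffices to prove the normalized inequality $|Q_{l}^{(j)}(t)|\leq\frac{l^{2}(l^{2}-1)\cdots(l^{2}-(j-1)^{2})}{(2j-1)!!}\,W$ for all $t\in(-1,1)$.

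Next I would appeal to V.A. Markov's theorem on $[-1,1]$, which asserts that among all real polynomials $Q$ of degree at most $l$ satisfying $\|Q\|_{\infty,[-1,1]}\leq W$, the Chebyshev polynomial $T_{l}$ of the first kind (scaled by $W$) maximizes $|Q^{(j)}(t)|$ at every $t\in[-1,1]$, and in particular
\[
\sup_{t\in[-1,1]}|Q^{(j)}(t)|\leq T_{l}^{(j)}(1)\,W.
\]
The standard proof of this extremal statement relies on the Chebyshev equioscillation/alternation theorem together with a perturbation argument: if some polynomial $Q$ achieved a larger $|Q^{(j)}(t_{0})|$, one could construct a polynomial of the same degree with strictly smaller sup-norm but the same leading coefficient as $T_{l}$, contradicting the minimality property of $T_{l}$ among monic-normalized polynomials.

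It then remains to identify the explicit constant $T_{l}^{(j)}(1)$. Using the parametrization $T_{l}(\cos\theta)=\cos(l\theta)$ and differentiating $j$ times, or equivalently exploiting the second-order ODE $(1-x^{2})T_{l}''-xT_{l}'+l^{2}T_{l}=0$ and iterating it to obtain a recurrence for the values $T_{l}^{(j)}(1)$, one arrives at
\[
T_{l}^{(j)}(1)=\prod_{i=0}^{j-1}\frac{l^{2}-i^{2}}{2i+1}=\frac{l^{2}(l^{2}-1)\cdots(l^{2}-(j-1)^{2})}{(2j-1)!!}.
\]
Combining this identity with the rescaling step gives the bound exactly as stated, and the restriction to $s\in\mathrm{int}\,S$ is harmless (the bound in fact persists up to the closed interval).

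The main obstacle is V.A. Markov's theorem itself: unlike the first-derivative case $j=1$, which admits a short proof via a direct interpolation identity, the case of arbitrary $j$ genuinely requires the extremal characterization of the Chebyshev polynomials and a nontrivial equioscillation argument. I would therefore cite this as a classical result (a reference in the approximation-theory literature such as Cheney or Timan suffices) rather than reprove it, and focus the written proof on the two routine steps: the affine rescaling and the evaluation of $T_{l}^{(j)}(1)$.
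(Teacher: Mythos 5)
Your proposal is correct in substance, but you should know that the paper itself offers no proof of this theorem at all: it states the inequality as Markoff's classical result and passes directly to the application, namely checking that the elementary Lagrange polynomials on the Chebyshev knots satisfy $\sup_{s\in S}\left\vert L_{s_{k}}\left(  s\right)  \right\vert \leq\pi$, so that the theorem can be invoked with $W=\pi$. Your route --- the affine change of variable $\phi(t)=\frac{\overline{s}-\underline{s}}{2}t+\frac{\overline{s}+\underline{s}}{2}$ pulling the problem back to $[-1,1]$ and producing the factor $\left(  2/(\overline{s}-\underline{s})\right)  ^{j}$, followed by V.~A.~Markov's inequality and the evaluation $T_{l}^{(j)}(1)=\prod_{i=0}^{j-1}\frac{l^{2}-i^{2}}{2i+1}$ from the Chebyshev differential equation --- is the standard way to obtain exactly the stated constant, and the evaluation checks out (e.g.\ $T_{l}'(1)=l^{2}$, $T_{l}''(1)=l^{2}(l^{2}-1)/3$, consistent with $(2j-1)!!=\prod_{i=0}^{j-1}(2i+1)$). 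Two caveats. First, your claim that $T_{l}$ maximizes $\vert Q^{(j)}(t)\vert$ at \emph{every} $t\in[-1,1]$ overstates the classical theorem: the pointwise extremal polynomial varies with $t$ in the interior (the pointwise comparison with $\vert T_{l}^{(j)}(t)\vert$ belongs to the more delicate Schur and Duffin--Schaeffer circle of results); what V.~A.~Markov actually provides is the uniform bound $\sup_{t}\vert Q^{(j)}(t)\vert\leq T_{l}^{(j)}(1)\,\sup_{t}\vert Q(t)\vert$, which is all you need, so the conclusion is unaffected. Second, your sketched equioscillation-plus-perturbation justification is not a proof for $j\geq2$ --- as you acknowledge --- so your argument ultimately rests on citing the classical theorem, exactly as the paper does; the added value of your write-up over the paper is the explicit rescaling step and the derivation of the constant.
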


\noindent When applied to the elementary Lagrange polynomial, it is readily
checked that $W=\pi$. Indeed,%

\[
\left\vert L_{s_{k}}\left(  s\right)  \right\vert =\left\vert \frac{\left(
-1\right)  ^{k}\sin\left(  \frac{2k-1}{2l+2}\pi\right)  }{l+1}\frac
{\cos\left(  \left(  l+1\right)  \theta\right)  }{\cos\theta-\cos\left(
\frac{2k-1}{2l+2}\pi\right)  }\right\vert \leq
\]

\[
\leq\frac{\left\vert \sin\left(  \frac{2k-1}{2l+2}\pi\right)  \right\vert
}{l+1}\frac{\left\vert \cos\left(  \left(  l+1\right)  \theta\right)
\right\vert }{\left\vert \cos\theta-\cos\left(  \frac{2k-1}{2l+2}\pi\right)
\right\vert }\leq
\]

\[
\leq\frac{\left\vert \sin\left(  \frac{2k-1}{2l+2}\pi\right)  \right\vert
}{l+1}\frac{\left(  l+1\right)  \left\vert \theta-\frac{2k-1}{2l+2}%
\pi\right\vert }{\frac{1}{\pi}\sin\left(  \frac{2k-1}{2l+2}\pi\right)
\left\vert \theta-\frac{2k-1}{2l+2}\pi\right\vert }=\pi.
\]

\noindent We used
\[
\left\vert \cos\left(  \left(  l+1\right)  \theta\right)  \right\vert
=\left\vert \cos\left(  \left(  l+1\right)  \theta\right)  -\cos\left(
\left(  l+1\right)  \frac{2k-1}{2l+2}\pi\right)  \right\vert \leq\left(
l+1\right)  \left\vert \theta-\frac{2k-1}{2l+2}\pi\right\vert
\]
\noindent and $\cos\left(  \left(  l+1\right)  \frac{2k-1}{2l+2}\pi\right)
=0$. Moreover,
\[
\left\vert \cos\theta-\cos\left(  \frac{2k-1}{2l+2}\pi\right)  \right\vert
=2\sin\left(  \frac{\theta+\frac{2k-1}{2l+2}\pi}{2}\right)  \left\vert
\sin\left(  \frac{\theta-\frac{2k-1}{2l+2}\pi}{2}\right)  \right\vert .
\]
\noindent The concavity of the sine function on $\left[  0,\pi\right]  $ implies%

\[
\sin\left(  \frac{\theta+\frac{2k-1}{2l+2}\pi}{2}\right)  \geq\frac{1}%
{2}\left(  \sin\theta+\sin\left(  \frac{2k-1}{2l+2}\pi\right)  \right)
\]

\[
\left\vert \sin\left(  \frac{\theta-\frac{2k-1}{2l+2}\pi}{2}\right)
\right\vert \geq\frac{2}{\pi}\left\vert \theta-\frac{2k-1}{2l+2}\pi\right\vert
,\theta\in\left[  0,\pi\right]  .
\]

\begin{remark}
The Cauchy theorem merely gives a rough upper bound. In order to obtain a
sharper upper bound, we would assume some additional hypotheses on the
derivatives of the function.
\end{remark}

\subsection{Case 2: $Y$ is an unbounded random variable}

If the support of the random variable $Y$ is not bounded and $\varphi$ is a
polynomial of unknown degree $t$, $t \leq g-1$, with $g$ known, it's still
possible to give an upper bound for the estimation error. Since%

\[
\varphi^{\left(  d\right)  }=\sum_{i=0}^{g-1}\frac{\varphi^{\left(
d+i\right)  }\left(  s^{\ast}\right)  }{i!}\left(  u-s^{\ast}\right)  ^{i}=
\]

\[
=\sum_{i=0}^{g-1}\frac{\sum_{k=0}^{g-1}L_{s_{k}}^{\left(  d+i\right)  }\left(
s^{\ast}\right)  \varphi\left(  s_{k}\right)  }{i!}\left(  u-s^{\ast}\right)
^{i}=\sum_{k=0}^{g-1}L_{s_{k}}^{\left(  d\right)  }\left(  u\right)
\varphi\left(  s_{k}\right)  ,
\]

\noindent$\varphi^{\left(  d\right)  }$ can be estimated as follows%

\[
\widehat{\varphi^{\left(  d\right)  }\left(  u\right)  }=\sum_{k=0}%
^{g-1}L_{s_{k}}^{\left(  d\right)  }\left(  u\right)  \overline{Y}\left(
s_{k}\right)  .
\]

\noindent We have in probability $\widehat{\varphi^{\left(  d\right)  }%
}\rightarrow\varphi^{\left(  d\right)  }$ for $\min\left(  n_{k}\right)
\rightarrow\infty$. So,
\[
Var\left(  \widehat{\varphi^{\left(  d\right)  }}\right)  =\sum_{k=0}%
^{g-1}\left(  L_{s_{k}}^{\left(  d\right)  }\left(  u\right)  \right)
^{2}\frac{\varsigma}{n_{k}}\rightarrow0,
\]
\noindent where $\varsigma$ is the variance of $Z$. We use the Tchebycheff's
inequality in order to obtain an upper bound for the estimation error. For a
given $\eta$,
\[
\Pr\left\{  \left\vert \widehat{\varphi^{\left(  d\right)  }}-\varphi^{\left(
d\right)  }\right\vert \geq\eta\right\}  \leq\frac{\sum_{k=0}^{g-1}\left(
L_{s_{k}}^{\left(  d\right)  }\left(  u\right)  \right)  ^{2}\frac{\varsigma
}{n_{k}}}{\eta^{2}}.
\]

\noindent If we aim to obtain, for all fixed $\omega$, $\Pr\left\{  \left\vert
\widehat{\varphi^{\left(  d\right)  }}-\varphi^{\left(  d\right)  }\right\vert
\geq\eta\right\}  \leq\omega\text{,} $ we can choose $n^{\ast}$ as the
solution of the equation $\frac{\sum_{k=0}^{g-1}\left(  L_{s_{k}}^{\left(
d\right)  }\left(  u\right)  \right)  ^{2}\frac{\varsigma}{n_{k}}}{\eta^{2}%
}=\omega$, that is
\[
n^{\ast}=\frac{\sum_{k=0}^{g-1}\left(  L_{s_{k}}^{\left(  d\right)  }\left(
u\right)  \right)  ^{2}\varsigma}{\omega\eta^{2}}.
\]
\noindent The integer $\left[  n^{\ast}\right]  $ is such that the inequality
$\Pr\left\{  \left\vert \widehat{\varphi^{\left(  d\right)  }}-\varphi
^{\left(  d\right)  }\right\vert \geq\eta\right\}  \leq\omega$ is satisfied.

\noindent We remark that if we know the degree $t$ of the polynomial, then it
is sufficient to set $g-1=t$. When $\varphi\left(  u\right)  =\varphi
^{d}\left(  u\right)  $ (i.e., $d=0$), we have $\left[  n^{\ast}\right]
=\frac{\sum_{k=0}^{g-1}\left(  L_{s_{k}}\left(  u\right)  \right)
^{2}\varsigma}{\omega\eta^{2}}.$

\noindent We underline that for $d=0$ and when $t$ is known $\widehat
{\varphi^{\left(  d\right)  }\left(  u\right)  }=\widehat{\varphi\left(
u\right)  }$ coincides with Hoel's estimator.

\noindent If the solely information on $\varphi$ is that $\varphi$ is analytic
then we are constrained to give hypotheses on the derivatives of the function.
More precisely, since $\operatorname{Im}\varphi\subseteq\mathbb{R}$, we can't
apply the Cauchy theorem on the analytic functions; we can only say that
$\varphi\left(  v\right)  =E\left(  Y\right)  \in\mathbb{R}$. So, we are not
able to find a constant $R$ such that $\left\vert \varphi\left(  v\right)
\right\vert \leq R$. Moreover, since we can't observe $\varphi\left(
v\right)  $ for $v\notin S$, we don't have any data to estimate $M_{Taylor}$.

\bigskip

\section{Bibliography}

\end{document}